\pgfplotsset{compat=1.18}
\setlist[enumerate]{nosep}
\definecolor{labelkey}{rgb}{0,0.08,0.45}
\definecolor{refkey}{rgb}{0,0.6,0.0}
\definecolor{Brown}{rgb}{0.45,0.0,0.05}
\definecolor{lime}{rgb}{0.00,0.8,0.0}
\definecolor{lblue}{rgb}{0.5,0.5,0.99}
\definecolor{OliveGreen}{rgb}{0,0.6,0}
\definecolor{tyrianpurple}{rgb}{0.4, 0.01, 0.24}
\colorlet{hlcyan}{cyan!30}
\def\namedlabel#1#2{\begingroup
   \def\@currentlabel{#2}%
   \label{#1}\endgroup
}
\newcommand{\seppthree}{\setlength{\itemsep}{-3pt}}
\newcommand{\nnn}{\ensuremath{{n\in{\mathbb N}}}}
\newcommand{\thalb}{\ensuremath{\tfrac{1}{2}}}
\newcommand{\To}{\ensuremath{\rightrightarrows}}
\newcommand{\fenv}[1]%
{\ensuremath{\,\overrightarrow{\operatorname{env}}_{#1}}}
\newcommand{\benv}[1]%
{\ensuremath{\,\overleftarrow{\operatorname{env}}_{#1}}}
\newcommand{\scal}[2]{\left\langle{#1},{#2}  \right\rangle}
\newcommand{\cerouno}{\ensuremath{\left]0,1\right[}}
\newcommand{\RR}{\ensuremath{\mathbb R}}
\newcommand{\ran}{\ensuremath{{\operatorname{ran}}\,}}
\newcommand{\Fix}{\ensuremath{\operatorname{Fix}}}
\newcommand{\Id}{\ensuremath{\operatorname{Id}}}
\newcommand{\pinf}{\ensuremath{+\infty}}
\providecommand{\fejer}{Fej\'{e}r}
{\begin{list}{}{%
\settowidth{\labelwidth}{\textrm{#1~}}%
\setlength{\leftmargin}{\labelwidth+\labelsep}}}
{\end{list}}
\def\th@plain{%
	\thm@notefont{}
	\itshape 
}
\def\th@definition{%
	\thm@notefont{}
	\normalfont 
}
\crefname{equation}{}{equations}
\crefname{item}{}{items}
\crefname{enumi}{}{}
\newtheorem{theorem}{Theorem}[section]
\newaliascnt{lemma}{theorem}
\crefname{lemma}{Lemma}{Lemmas}
\Crefname{lemma}{Lemma}{Lemmas}
\newaliascnt{lem}{theorem}
\crefname{lem}{Lemma}{Lemmas}
\Crefname{lem}{Lemma}{Lemmas}
\newaliascnt{corollary}{theorem}
\crefname{corollary}{Corollary}{Corollaries}
\Crefname{corollary}{Corollary}{Corollaries}
\newaliascnt{cor}{theorem}
\crefname{cor}{Corollary}{Corollaries}
\Crefname{cor}{Corollary}{Corollaries}
\newaliascnt{proposition}{theorem}
\newtheorem{proposition}[proposition]{Proposition}
\crefname{proposition}{Proposition}{Propositions}
\Crefname{proposition}{Proposition}{Propositions}
\newaliascnt{prop}{theorem}
\crefname{prop}{Proposition}{Propositions}
\Crefname{prop}{Proposition}{Propositions}
\newaliascnt{definition}{theorem}
\crefname{definition}{Definition}{Definitions}
\Crefname{definition}{Definition}{Definitions}
\newaliascnt{defn}{theorem}
\crefname{defn}{Definition}{Definitions}
\Crefname{defn}{Definition}{Definitions}
\newaliascnt{thm}{theorem}
\crefname{thm}{Theorem}{Theorems}
\Crefname{thm}{Theorem}{Theorems}
\newaliascnt{example}{theorem}
\crefname{example}{Example}{Examples}
\Crefname{example}{Example}{Examples}
\newaliascnt{ex}{theorem}
\crefname{ex}{Example}{Examples}
\Crefname{ex}{Example}{Examples}
\newaliascnt{fact}{theorem}
\newtheorem{fact}[fact]{Fact}
\crefname{fact}{Fact}{Facts}
\Crefname{fact}{Fact}{Facts}
\newaliascnt{remark}{theorem}
\newtheorem{remark}[remark]{Remark}
\crefname{remark}{Remark}{Remarks}
\Crefname{remark}{Remark}{Remarks}
\newaliascnt{rem}{theorem}
\crefname{rem}{Remark}{Remarks}
\Crefname{rem}{Remark}{Remarks}
\crefname{chapter}{Appendix}{chapters}
\providecommand{\RR}{\mathbb{R}}
\providecommand{\ran}{\operatorname{ran}}
\providecommand{\Id}{\operatorname{{ Id}}}
\providecommand{\To}{\rightrightarrows}
\providecommand{\ran}{\operatorname{ran}}
\providecommand{\Id}{\operatorname{Id}}
\newcommand{\cran}{\ensuremath{\overline{\operatorname{ran}}\,}}
\providecommand{\RR}{\mathbb{R}}
\definecolor{myblue}{rgb}{0.9,0.9,0.98}
  \newcommand*\mybluebox[1]{%
    \colorbox{myblue}{\hspace{1em}#1\hspace{1em}}}
\begin{document}

\setlength{\abovedisplayskip}{8pt}
\setlength{\belowdisplayskip}{8pt}	
\newsavebox\myboxA
\newsavebox\myboxB
\newlength\mylenA

\newcommand*\xoverline[2][0.75]{%
    \sbox{\myboxA}{$#2$}%
    \setbox\myboxB\null
    \ht\myboxB=\ht\myboxA%
    \dp\myboxB=\dp\myboxA%
    \wd\myboxB=#1\wd\myboxA
    \sbox\myboxB{$\overline{\copy\myboxB}$}
    \setlength\mylenA{\the\wd\myboxA}
    \addtolength\mylenA{-\the\wd\myboxB}%
    \ifdim\wd\myboxB<\wd\myboxA%
       \rlap{\hskip 0.5\mylenA\usebox\myboxB}{\usebox\myboxA}%
    \else
        \hskip -0.5\mylenA\rlap{\usebox\myboxA}{\hskip 0.5\mylenA\usebox\myboxB}%
    \fi}
\makeatother

\makeatletter
\renewcommand*\env@matrix[1][\arraystretch]{%
  \edef\arraystretch{#1}%
  \hskip -\arraycolsep
  \let\@ifnextchar\new@ifnextchar
  \array{*\c@MaxMatrixCols c}}
\makeatother

\providecommand{\wbar}{\xoverline[0.9]{w}}
\providecommand{\ubar}{\xoverline{u}}

\newcommand{\nn}[1]{\ensuremath{\textstyle\mathsmaller{({#1})}}}
\newcommand{\crefpart}[2]{%
  \hyperref[#2]{\namecref{#1}~\labelcref*{#1}~\ref*{#2}}%
}
\newcommand\bigzero{\makebox(0,0){\text{\LARGE0}}}
	

%

\author{
Sedi Bartz\thanks{
Mathematics and Statistics, UMass Lowell, MA 01854, USA. E-mail:
\texttt{sedi\_bartz@uml.edu}.},~
Heinz H.\ Bauschke\thanks{
Mathematics, University
of British Columbia,
Kelowna, B.C.\ V1V~1V7, Canada. E-mail:
\texttt{heinz.bauschke@ubc.ca}.}~~~and~
Yuan Gao\thanks{
Mathematics, University
of British Columbia,
Kelowna, B.C.\ V1V~1V7, Canada. E-mail:
\texttt{y.gao@ubc.ca}.}
}

\title{\textsf{ 
Baillon-Bruck-Reich revisited: 
divergent-series parameters and strong convergence in the linear case
}
}

\date{March 22, 2026}

\maketitle

\begin{center}
\emph{
To Simeon Reich, on the occasion of his 80th birthday,\\
with deep thanks for decades of inspiration and mentorship. 
}
\end{center}

\begin{abstract}
The Krasnosel'ski\u\i-Mann iteration is an important algorithm in optimization and variational analysis for finding
fixed points of nonexpansive mappings. In the general case, it 
produces a sequence converging \emph{weakly} to a fixed point provided 
the parameter sequence satisfies a divergent-series condition. 

In this paper, we show that \emph{strong} convergence holds provided 
the underlying nonexpansive mapping is \emph{linear}. This improves 
on a celebrated result by Baillon, Bruck, and Reich from 1978, 
where the parameter sequence was assumed to be constant as well as on recent work where 
the parameters were bounded away from $0$ and $1$. 
\end{abstract}


{ 
\small
\noindent
{\bfseries 2020 Mathematics Subject Classification:}
{Primary 
47H05, 
47H09;
Secondary 
47N10, 
65K05, 
90C25.
}

\noindent {\bfseries Keywords:}
Baillon-Bruck-Reich theorem, 
fixed point, 
Hilbert space,
Krasnosel'ski\u\i-Mann iteration, 
nonexpansive mapping.

\section{Introduction}

Throughout this note, 
\begin{empheq}[box=\mybluebox]{equation}
\text{$X$ is a real Hilbert space,}
\end{empheq}
with inner product $\scal{\cdot}{\cdot}$ and induced norm $\|\cdot\|$. 
We also assume that 
\begin{empheq}[box=\mybluebox]{equation}
\label{e:T}
\text{$T\colon X\to X$ is nonexpansive, with $F := \Fix T\neq\varnothing$.}
\end{empheq}
When $\lambda\in\RR$, we set
\begin{empheq}[box=\mybluebox]{equation}
T_\lambda := (1-\lambda)\Id + \lambda T,
\end{empheq}
where $\Id\colon X\to X\colon x\mapsto x$ is the identity operator. 

Many problems in optimization and variational analysis 
can be reduced to finding a point in $F$ 
(see, e.g., \cite{BC2017}, \cite{Condat}, \cite{Ryubook}, 
and the references therein). 
A popular algorithmic approach is to employ the
\emph{Krasnosel'ski\u\i-Mann iteration} for which we now state a basic 
convergence result:

\begin{fact}[Reich]
\label{f:Reich}
Let $(\lambda_n)_\nnn$ be in $[0,1]$ and assume that 
$\sum_\nnn (1-\lambda_n)\lambda_n=\pinf$. 
Let $x_0\in X$.
Then the sequence generated by 
\begin{equation}
(\forall\nnn)\quad
x_{n+1} := T_{\lambda_n}x_n
\end{equation}
converges weakly to a point in $F$. 
Moreover, 
\begin{equation}
\label{e:Reich}
x_n-Tx_n \to 0,
\end{equation}
and $(x_n)_\nnn$ is \fejer\ monotone with respect to $F$. 
\end{fact}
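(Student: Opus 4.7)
The plan is to derive the three conclusions in the order: Fej\'er monotonicity, asymptotic regularity (\ref{e:Reich}), and finally weak convergence. All three will come out of a single squared-norm identity applied to the update rule.

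First, I would fix any $f\in F$. Since $T$ is nonexpansive and $\lambda_n\in[0,1]$, the averaged operator $T_{\lambda_n}$ is also nonexpansive and fixes $f$. Using the classical identity
\[
\|(1-\lambda)u+\lambda v\|^2 = (1-\lambda)\|u\|^2 + \lambda\|v\|^2 - \lambda(1-\lambda)\|u-v\|^2
\]
with $u=x_n-f$, $v=Tx_n-f$, and $\lambda=\lambda_n$, together with $\|Tx_n-f\|\le\|x_n-f\|$, I would obtain
\[
\|x_{n+1}-f\|^2 \;\le\; \|x_n-f\|^2 \;-\; \lambda_n(1-\lambda_n)\,\|x_n-Tx_n\|^2.
\]
This immediately yields Fej\'er monotonicity of $(x_n)_\nnn$ with respect to $F$ (so $(x_n)$ is bounded and $\|x_n-f\|$ is convergent for every $f\in F$), and, after telescoping,
\[
\sum_\nnn \lambda_n(1-\lambda_n)\,\|x_n-Tx_n\|^2 \;\le\; \|x_0-f\|^2 \;<\; \pinf.
\]

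The main obstacle is passing from this summability to the pointwise statement $x_n-Tx_n\to 0$: the divergence of $\sum \lambda_n(1-\lambda_n)$ alone does not imply $\|x_n-Tx_n\|\to 0$ from summability of $\lambda_n(1-\lambda_n)\|x_n-Tx_n\|^2$. The standard way around this is to prove that the sequence $(\|x_n-Tx_n\|)_\nnn$ is itself nonincreasing. To do this I would write $y=T_{\lambda_n}x_n=(1-\lambda_n)x_n+\lambda_n Tx_n$ and estimate $\|y-Ty\|$ by the triangle inequality and nonexpansiveness of $T$:
\[
\|y-Ty\| \le (1-\lambda_n)\|x_n-Ty\| + \lambda_n\|Tx_n-Ty\| \le (1-\lambda_n)\|x_n-Ty\| + \lambda_n^2\|x_n-Tx_n\|,
\]
and then bound $\|x_n-Ty\|\le \|x_n-y\|+\|y-Ty\|=\lambda_n\|x_n-Tx_n\|+\|y-Ty\|$; rearranging gives $\|y-Ty\|\le\|x_n-Tx_n\|$, i.e., $\|x_{n+1}-Tx_{n+1}\|\le\|x_n-Tx_n\|$. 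Combined with the summability bound and $\sum\lambda_n(1-\lambda_n)=\pinf$, this forces $\|x_n-Tx_n\|\to 0$.

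Finally, for the weak convergence I would appeal to Opial's lemma. The sequence $(x_n)$ is bounded, so it has weak cluster points. If $x_{n_k}\weak\bar x$, then $x_{n_k}-Tx_{n_k}\to 0$ together with the demiclosedness of $\Id-T$ at $0$ (a standard property of nonexpansive mappings on Hilbert space) gives $\bar x\in F$. Since Fej\'er monotonicity ensures that $\|x_n-f\|$ converges for every $f\in F$, Opial's lemma rules out two distinct weak cluster points in $F$, and hence the whole sequence converges weakly to a point of $F$.
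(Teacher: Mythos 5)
Your proof is correct, but it is genuinely different from what the paper does: the paper does not prove this fact at all, it simply cites \cite[Theorem~2]{Reich} (which is set in a uniformly convex Banach space with Fr\'echet differentiable norm) and points to \cite[Theorem~5.15]{BC2017}. What you have written is a self-contained Hilbert-space proof, and each step checks out. The identity $\|(1-\lambda)u+\lambda v\|^2=(1-\lambda)\|u\|^2+\lambda\|v\|^2-\lambda(1-\lambda)\|u-v\|^2$ with $u=x_n-f$, $v=Tx_n-f$ gives exactly the Fej\'er inequality and the telescoped summability $\sum_\nnn\lambda_n(1-\lambda_n)\|x_n-Tx_n\|^2<\pinf$; your key observation that this alone does not yield \cref{e:Reich} under the mere divergence hypothesis is the right one, and your triangle-inequality argument for $\|x_{n+1}-Tx_{n+1}\|\le\|x_n-Tx_n\|$ is valid (after rearranging one gets $\lambda_n\|y-Ty\|\le\lambda_n\|x_n-Tx_n\|$, and the case $\lambda_n=0$ is trivial since then $x_{n+1}=x_n$), so that the nonincreasing sequence $\|x_n-Tx_n\|$ must have limit $0$, since a positive limit would contradict summability against $\sum_\nnn\lambda_n(1-\lambda_n)=\pinf$. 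The final step via demiclosedness of $\Id-T$ at $0$ and Opial's argument is the standard route to weak convergence. In short: the paper buys brevity and greater generality (Banach spaces) by citation; your argument buys a complete, elementary Hilbert-space proof that makes visible exactly where the divergent-series condition enters --- which is in fact the same mechanism the main theorem of the paper later exploits through \cref{e:Reich}.
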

\begin{proof}
This is \cite[Theorem~2]{Reich}, which holds in significantly more general settings. (See also \cite[Theorem~5.15]{BC2017}.)
\end{proof}

We refer the reader also to the recent book \cite{KMbook}, 
dedicated entirely to 
Krasnosel'ski\u\i-Mann iterations. 
When $T$ is \emph{linear}, the Krasnosel'ski\u\i-Mann iteration 
produces a \emph{strongly} convergent sequence provided that the 
parameter sequence $(\lambda_n)_\nnn$ is \emph{constant}: 

\begin{fact}[Baillon-Bruck-Reich] 
\label{f:BBR}
Suppose that $T$ is \emph{linear} and that $\lambda\in\cerouno$. 
Let $x_0\in X$, and generate sequence $(x_n)_\nnn$ in $X$ via 
\begin{equation}
(\forall\nnn)\quad
x_{n+1} := T_{\lambda}x_n
\end{equation}
Then
\begin{equation}
x_n \to P_{F}x_0. 
\end{equation}
\end{fact}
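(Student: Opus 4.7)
My plan is to decompose $X$ orthogonally with respect to $F$, observe that the Krasnoselskii-Mann iteration acts trivially on $F$, and force strong convergence to zero on the complement by a density argument that essentially uses linearity.

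First, I would record that, because $T$ is linear and nonexpansive, $\fix T = \fix T^{*}$: if $Tx = x$, then $\|T^{*}x\| \le \|x\|$ and $\langle T^{*}x, x\rangle = \langle x, Tx\rangle = \|x\|^2$, so
\begin{equation*}
\|T^{*}x - x\|^2 \;=\; \|T^{*}x\|^2 - 2\langle T^{*}x, x\rangle + \|x\|^2 \;=\; \|T^{*}x\|^2 - \|x\|^2 \;\le\; 0,
\end{equation*}
and the reverse inclusion is obtained by applying the same inequality to $T^{*}$. Consequently $F = \ker(\Id - T^{*}) = (\ran(\Id - T))^{\perp}$, and $X = F \oplus V$ orthogonally, where $V := \overline{\ran(\Id - T)}$. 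Since $T_\lambda$ is linear and $T_\lambda P_F x_0 = P_F x_0$, writing $x_0 = P_F x_0 + y$ with $y \in V$ gives $x_n = P_F x_0 + T_\lambda^{n} y$, so it suffices to prove $T_\lambda^{n} y \to 0$ strongly for every $y \in V$.

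Next, I would handle the dense subspace $\ran(\Id - T)$ of $V$. For any $u \in X$, linearity yields the commutation $T_\lambda^{n}(\Id - T_\lambda) = (\Id - T_\lambda)T_\lambda^{n}$; combined with the identity $\Id - T = \lambda^{-1}(\Id - T_\lambda)$, this gives
\begin{equation*}
T_\lambda^{n}(\Id - T)u \;=\; \lambda^{-1}\bigl(T_\lambda^{n}u - T_\lambda^{n+1}u\bigr).
\end{equation*}
Applying \cref{f:Reich} with constant parameter $\lambda_n \equiv \lambda \in \cerouno$ (so that the divergent-series condition $\sum_n (1-\lambda)\lambda = \pinf$ is automatic) to the sequence starting at $u$ supplies $T_\lambda^{n}u - T(T_\lambda^{n}u) \to 0$; multiplying by $\lambda$ and using $T_\lambda = (1-\lambda)\Id + \lambda T$ yields $T_\lambda^{n}u - T_\lambda^{n+1}u \to 0$. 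Therefore $T_\lambda^{n}(\Id - T)u \to 0$ strongly for every $u \in X$. Now given $y \in V$ and $\varepsilon > 0$, choose $u \in X$ with $\|y - (\Id - T)u\| < \varepsilon$ and use the nonexpansiveness of $T_\lambda^{n}$ to estimate
\begin{equation*}
\|T_\lambda^{n} y\| \;\le\; \|y - (\Id - T)u\| + \|T_\lambda^{n}(\Id - T)u\| \;<\; \varepsilon + \|T_\lambda^{n}(\Id - T)u\|.
\end{equation*}
Passing to $\limsup_n$ gives $\limsup_n \|T_\lambda^{n} y\| \le \varepsilon$, and the arbitrariness of $\varepsilon$ forces $T_\lambda^{n} y \to 0$. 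Combined with the decomposition above, this is $x_n \to P_F x_0$.

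The main obstacle I anticipate is conceptual rather than technical: the whole argument pivots on the linearity of $T$ through the commutation $T_\lambda^{n}(\Id - T_\lambda) = (\Id - T_\lambda)T_\lambda^{n}$, which converts the residual $T_\lambda^{n}(\Id - T)u$ into the telescoping difference $T_\lambda^{n}u - T_\lambda^{n+1}u$ already controlled by \cref{f:Reich}. This mechanism is exactly what fails for a general nonexpansive $T$, and pinpoints why the linearity assumption upgrades weak convergence to strong convergence.
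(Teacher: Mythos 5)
Your proof is correct, and it is essentially the argument the paper itself uses: the paper only cites the literature for this Fact, but its proof of the more general \cref{t:main} follows exactly your route --- orthogonal decomposition $X = F \oplus \cran(\Id-T)$ via $\Fix T = \Fix T^*$, reduction to showing $T_\lambda^n y \to 0$ on $F^\perp$, and a density argument on $\ran(\Id-T)$ powered by the residual convergence $x_n - Tx_n \to 0$ from \cref{f:Reich}. The only cosmetic difference is that you establish $\Fix T=\Fix T^*$ by a direct inner-product computation, whereas the paper's Proposition invokes maximal monotonicity of $\Id-T$.
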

\begin{proof}
This is \cite[Example~5.29]{BC2017}; however, 
the main ideas of the proof are in
\cite{BBR} and \cite{BR}. 
\end{proof}

Recently, \cref{f:BBR} was generalized in \cite{BG} to deal with 
a parameter sequence $(\lambda_n)_\nnn$ satisfying 
$0<\inf_\nnn \lambda_n \leq \sup_\nnn \lambda_n < 1$. 

\emph{The goal of this paper is to extend the Baillon-Bruck-Reich
result (\cref{f:BBR}) to the general parameter sequence 
$(\lambda_n)_\nnn$ satisfying the divergent-series condition $\sum_\nnn (1-\lambda_n)\lambda_n=\infty$.} 

The remainder of the paper is organized as follows. 
In \cref{s:main}, we present the main result (\cref{t:main}), its proof, 
and corresponding comments. In \cref{sec:last}, we offer 
some concluding remarks.

The notation we employ is fairly standard and follows largely 
\cite{BC2017}.

\section{Main result}
\label{s:main}

\begin{proposition}
\label{p:Fperp}
Recall that $T$ satisfies \cref{e:T}. 
Assume in addition that $T$ is linear. 
Then 
\begin{equation}
\label{e:Fperp}
F^\perp = \cran(\Id-T). 
\end{equation}
\end{proposition}
\begin{proof}
Because $T$ is nonexpansive, we deduce from 
\cite[Example~20.29]{BC2017} that $\Id-T$ is maximally monotone. 
In turn, it follows from 
\cite[Example~20.17]{BC2017} that 
\begin{equation}
F = \ker(\Id-T) = \ker(\Id-T^*).
\end{equation}
On the other hand, 
because $\Id-T$ is a continuous linear operator, we have 
\begin{equation}
\big(\ker(\Id-T^*)\big)^\perp = \cran(\Id-T).
\end{equation}
Altogether, $F^\perp = \cran(\Id-T)$ and so \cref{e:Fperp} holds. 
%
\end{proof}

\begin{remark}
We are grateful to a reviewer who outlined the following alternative 
proof of \cref{p:Fperp}: 
Let $A\colon X\To X$ be a maximally monotone linear relation. 
Combining \cite[Fact~2.1.(v) and Theorem~3.2]{BWY} yields  
$(\ker A)^\perp = \cran A^* =\cran A$.
Then \cref{p:Fperp} follows when one sets $A:=\Id-T$. 
\end{remark}


We are now ready for the main result.

\begin{theorem}[main result]
\label{t:main}
Recall that $T$ satisfies \cref{e:T}, 
and assume in addition that $T$ is linear. 
Let $(\lambda_n)_\nnn$ be in $[0,1]$ such that 
$\sum_\nnn (1-\lambda_n)\lambda_n=\infty$, 
and let $x_0\in X$.
Then the sequence generated by 
\begin{equation}
(\forall\nnn)\quad
x_{n+1} := T_{\lambda_n}x_n
\end{equation}
converges strongly to $P_Fx_0$. 
\end{theorem}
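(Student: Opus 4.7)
The plan is to exploit the linearity of $T$ to decompose the problem along the subspaces $F$ and $F^\perp$, and then to use the commutation $T T_\lambda = T_\lambda T$ to reduce strong convergence on $F^\perp$ to the residual statement $x_n - Tx_n \to 0$ already supplied by Reich's theorem (\cref{f:Reich}).

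First I set $S_n := T_{\lambda_{n-1}}\cdots T_{\lambda_0}$ (with $S_0 := \Id$), so that $x_n = S_n x_0$. Each $S_n$ is linear, satisfies $\|S_n\| \le 1$ (a product of linear nonexpansive operators), and fixes every point of $F$ because each factor $T_{\lambda_k}$ does. Decomposing $x_0 = P_F x_0 + b$ with $b := P_{F^\perp} x_0$, linearity gives
\[
x_n = S_n P_F x_0 + S_n b = P_F x_0 + S_n b,
\]
so the theorem reduces to proving $S_n b \to 0$ for this $b \in F^\perp$, where by the preceding proposition $F^\perp = \cran(\Id - T)$.

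The heart of the argument is the observation that, since $T$ is linear, $T$ commutes with every $T_{\lambda_k} = (1-\lambda_k)\Id + \lambda_k T$, hence with each $S_n$. Therefore, for any $z \in X$,
\[
S_n (\Id - T) z = (\Id - T) S_n z = S_n z - T(S_n z).
\]
But $(S_n z)_\nnn$ is precisely the Krasnoselskii-Mann sequence starting from $z$ with the same parameters $(\lambda_n)_\nnn$, so \cref{f:Reich} yields $S_n z - T(S_n z) \to 0$ strongly. This proves $S_n y \to 0$ for every $y \in \ran(\Id - T)$. To extend to the closure, given $\varepsilon>0$ pick $y_\varepsilon \in \ran(\Id - T)$ with $\|b - y_\varepsilon\| < \varepsilon$; then, using $\|S_n\| \le 1$,
\[
\|S_n b\| \le \|S_n (b - y_\varepsilon)\| + \|S_n y_\varepsilon\| \le \varepsilon + \|S_n y_\varepsilon\|,
\]
so $\limsup_n \|S_n b\| \le \varepsilon$, and letting $\varepsilon \downarrow 0$ gives $S_n b \to 0$.

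The only genuine obstacle is conceptual: recognizing that the linearity-driven commutation $T S_n = S_n T$ converts the desired strong convergence $S_n y \to 0$ on the dense set $\ran(\Id - T)$ into the residual statement $x_n - T x_n \to 0$ that Reich's theorem already delivers for any Krasnoselskii-Mann iteration. Once that observation is made, the remaining ingredients — the uniform bound $\|S_n\| \le 1$, the identity $F^\perp = \cran(\Id - T)$ from the preceding proposition, and the standard density argument — assemble routinely.
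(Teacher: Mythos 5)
Your proof is correct and follows essentially the same route as the paper: decompose $x_0$ along $F\oplus F^\perp$, use linearity to reduce to showing the iterates of $P_{F^\perp}x_0$ vanish, exploit the commutation of $T$ with the composition $T_{\lambda_n}\cdots T_{\lambda_0}$ to convert convergence on $\ran(\Id-T)$ into the residual statement $x_n-Tx_n\to 0$ from \cref{f:Reich}, and finish with a density/nonexpansiveness argument. The only cosmetic difference is that you conclude via $\limsup_n\|S_nb\|\le\varepsilon$ while the paper uses the monotone limit $\ell$ of the decreasing norms; both are equally valid.
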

\begin{proof}
Note that $F \subseteq \Fix T_{\lambda_k}$. 
Set $f_0 := P_Fx_0$ and $g_0 := P_{F^\perp}x_0$ so that 
\begin{equation}
x_0 = f_0+g_0. 
\end{equation}
Clearly,
\begin{align}
x_{n+1} 
&= 
T_{\lambda_n}\cdots T_{\lambda_0}x_0
= 
T_{\lambda_n}\cdots T_{\lambda_0}f_0
+ 
T_{\lambda_n}\cdots T_{\lambda_0}g_0
= 
f_0 + 
T_{\lambda_n}\cdots T_{\lambda_0}g_0 
= 
P_Fx_0 + 
T_{\lambda_n}\cdots T_{\lambda_0}g_0. 
\end{align}
It thus suffices to show that 
\begin{equation}
\label{e:goal}
T_{\lambda_n}\cdots T_{\lambda_0}g_0 \to 0.
\end{equation}
Because $0\in F\subseteq \Fix T_{\lambda_k}$, we have 
that $(\|T_{\lambda_n}\cdots T_{\lambda_0}g_0\|)_\nnn$ is 
\emph{decreasing} and 
so 
\begin{equation}
\label{e:ell}
\ell := \lim_{n\to\infty} \|T_{\lambda_n}\cdots T_{\lambda_0}g_0\|
= \inf_{\nnn} \|T_{\lambda_n}\cdots T_{\lambda_0}g_0\|
\end{equation}
exists.
Next, \cref{e:Fperp} implies 
that 
$g_0 \in F^\perp = \cran(\Id-T)$. 
Let $\varepsilon>0$ and get 
$v_\varepsilon\in X$ such that 
$g_\varepsilon := (\Id-T)v_\varepsilon \in \ran(\Id-T)$ and 
$\|g_0-g_\varepsilon\|\leq\varepsilon$. 
Now \cref{e:Reich} from \cref{f:Reich} yields 
\begin{equation}
\label{e:to0}
T_{\lambda_n}\cdots T_{\lambda_0}g_\varepsilon
= 
T_{\lambda_n}\cdots T_{\lambda_0}(\Id-T)v_\varepsilon
= 
T_{\lambda_n}\cdots T_{\lambda_0}v_\varepsilon 
-TT_{\lambda_n}\cdots T_{\lambda_0}v_\varepsilon
\to 
0, 
\end{equation}
where we also used the fact that $T$ commutes with each $T_{\lambda_i}$.
Thus 
\begin{align}
\ell
&\leq 
\|T_{\lambda_n}\cdots T_{\lambda_0}g_0\|
\tag{by \cref{e:ell}}\\
&\leq 
\|T_{\lambda_n}\cdots T_{\lambda_0}g_0 
- T_{\lambda_n}\cdots T_{\lambda_0}g_\varepsilon\|
+ \|T_{\lambda_n}\cdots T_{\lambda_0}g_\varepsilon\|
\tag{triangle inequality}
\\
&\leq 
\|g_0-g_\varepsilon\| + 
\|T_{\lambda_n}\cdots T_{\lambda_0}g_\varepsilon\|
\tag{each $T_{\lambda_k}$ is nonexpansive}
\\
&\leq 
\varepsilon + 
\|T_{\lambda_n}\cdots T_{\lambda_0}g_\varepsilon\|
\tag{choice of $v_\varepsilon$ and $g_\varepsilon$}
\\
&\to \varepsilon 
\;\;\text{ as $n\to \infty$.}
\tag{by \cref{e:to0}}
\end{align}
Because $\varepsilon$ was chosen arbitrarily, we deduce that 
$\ell=0$, which verifies \cref{e:goal} and we're done. 
\end{proof}

We conclude with some remarks concerning \cref{t:main}. 

\begin{remark}[without linearity, strong convergence fails]
Thanks to an example constructed by 
Genel and Lindenstrauss (see \cite{GL}), 
it is known that the sequence $(x_n)_\nnn$ generated by 
the Krasnosel'ski\u\i-Mann iteration may fail to converge strongly
even when $\lambda_n\equiv\thalb$. 
A posteriori, the underlying nonexpansive mapping cannot be linear. 
\end{remark}

\begin{remark}[without divergent-series parameters, 
convergence to a fixed point fails]
Suppose that $x_0\in X\smallsetminus\{0\}$, and consider 
the \emph{linear} nonexpansive isometry $T = -\Id$. 
Then $F=\{0\}$. 
Assume that the parameter sequence $(\lambda_n)_\nnn$ lies in $\left[0,\thalb\right[$. 
Using mathematical induction, one checks that 
the sequence $(x_n)_\nnn$ generated by the Krasnosel'ski\u\i-Mann 
iteration satisfies
\begin{equation}
x_n = \Big(\prod_{k=0}^{n-1}(1-2\lambda_k)\Big) x_0; 
\end{equation}
moreover, $x_n \to 0$ 
$\Leftrightarrow$
$\sum_\nnn \lambda_n = \infty$. 
Therefore, if $\sum_\nnn (1-\lambda_n)\lambda_n <\infty$, then 
$(x_n)_\nnn$ does \emph{not} converge to a point in $F$. 
\end{remark}

\begin{remark}[extension to the affine case]
If $T\colon X\to X$ is assumed to be \emph{affine}, 
still with $F = \Fix T\neq\varnothing$, then 
\cref{t:main} holds as well. 
The (standard translation) argument is outlined in \cite[Section~3.2]{BG}. 
\end{remark}

\begin{remark}[for averaged $T$, one may increase the range of the parameter sequence]
If it is a priori known that $T$ is \emph{averaged}, then 
one may enlarge the range of parameter sequence to go beyond $1$. 
(See \cite[Section~3.1]{BG} for how this is done.)
\end{remark}


\section{Conclusions}

\label{sec:last}
A well-known result by Reich on Krasnosel'ski\u\i-Mann iterations yields weak convergence to a fixed point 
of a nonexpansive mapping $T$ under a \emph{general (divergent-series)} assumption on the parameters. On the other hand, when $T$ is linear, 
work by Baillon, Bruck and Reich implies that the convergence is \emph{strong} but the relaxation parameters needed to be constant. In this paper, we extend 
the Baillon-Bruck-Reich result from constant to general parameters in the linear case. The limit is explicitly identified, and 
some limiting examples and extensions are discussed. It would be interesting to know if this result remains true if 
$T$ is only assumed to be \emph{odd} instead of linear as was the case in
\cite{BBR}.


\section*{Acknowledgments}
The authors thank the reviewers and the editor for 
constructive comments which significantly improved the manuscript. The research of SB was partially supported by a collaboration grant for mathematicians of the Simons Foundation. The research of HHB was partially supported by a Discovery Grant 
of the Natural Sciences and Engineering Research Council of
Canada.


\end{document}